\theoremstyle{plain}
\newtheorem{thm}{Theorem}
\newtheorem{lem}[thm]{Lemma}
\newtheorem{prop}[thm]{Proposition}
\theoremstyle{definition}
\theoremstyle{remark}
\newtheorem{re}{Remark}
\begin{document}
	
\title[The Best constant for the almost uncentered maximal operator]
	{The Best constant for the almost uncentered maximal operator on radial decreasing functions}
	
\author{Wu-yi Pan}

\address{Key Laboratory of Computing and Stochastic Mathematics (Ministry of Education),
		School of Mathematics and Statistics, Hunan Normal University, Changsha, Hunan 410081, P.
		R. China}
	
\email{pwyyyds@163.com}

\date{\today}
	
\keywords{Almost uncentred maximal function.}

\thanks{The research is supported in part by the NNSF of China (Nos. 11831007, 12071125)}	
		
\subjclass[2010]{42B25}
	
\begin{abstract}
The purpose of this note is to find the least weak type $(1,1)$ bound for the almost uncentered maximal operator on radial decreasing functions.
\end{abstract}
	
\maketitle

Let $f$ be a non-negative locally
integrable function on $\mathbb{R}^d$ and let $\lambda$ be a real number in the unit interval $[0,1]$. The \emph{almost uncentred maximal function} of $f$ is defined by
\[
M_\lambda f\stackrel{\rm def}{=} \sup_{B:\lambda B\ni x} \frac{1}{|B|}\int_B fdx
\]
where set $\lambda B$ to
be the image of the ball $B$ under the homothety with the center of the $B$ and $|\cdot|$ denotes Lebesgue volume. With the increase of  control variables $\lambda$,  operators could continuous change from centered to uncentered  if we set $0B$ to be the center of $B$. The Hardy–Littlewood maximal theorem states that for $t>0$,
\begin{equation}\label{eq:1.1}
	|\{x\in \mathbb{R}^d:M_\lambda f(x)>t\}|\leq \frac{C(d)}{t}\Vert f\Vert_1
\end{equation}
where $C(d)$ depends only on the dimension $d$. Denote by $c_{d,\lambda}$ the best constant in \eqref{eq:1.1}.  The question of how large this number can been studied by several authors.  Here $c_{1,1}$ is equal to $2$ corresponding to a single Dirac delta. Compared with it, Melas \cite{Me03} determined the exact value of $c_{1,0}$ as $\frac{11+\sqrt{61}}{12}$  by a complicated argument. If  imposing on $f$ the
additional conditions of being radial, Men\'{a}rguez and Soria \cite{MS93} showed that $c_{d,0}\leq 4$ for all $d$.  Afterwards, Aldaz and Pérez Lázaro \cite{AP11} proved $c_{d,0}=1$ if proceeding to add the assumption of radially decreasing. 

In this short note, we prove that for all $0\leq \lambda\leq 1$, $c_{d,\lambda}=(1+\lambda)^d$ for considering the class of radial decreasing functions, that is

\begin{thm}\label{thm:1}
	Let $g\in L^1(\mathbb{R}^d)$ be a radial decreasing function.  Then for every $t>0$, \begin{equation*}
		|\{x\in \mathbb{R}^d:M_\lambda g(x)>t\}|\leq \frac{(1+\lambda)^d}{t}\Vert g\Vert_1.
	\end{equation*}
	Furthermore, the constant is sharp. 
\end{thm}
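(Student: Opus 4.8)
The plan is to prove the upper bound by establishing one geometric fact: for a radial decreasing $g$, every point $x$ with $M_\lambda g(x)>t$ admits a competing ball that \emph{contains the origin}. This single fact forces $|x|\le(1+\lambda)r$ and produces the constant $(1+\lambda)^d$ almost at once. Write $g(y)=\phi(|y|)$ with $\phi$ non-increasing and $\omega_d=|B(0,1)|$; by rotational symmetry take $x=\rho e_1$, $\rho=|x|$. The admissibility condition $\lambda B\ni x$ reads $|x-c|\le\lambda r$, so the centre $c$ ranges over $B(x,\lambda r)$; since $\int_{B(c,r)}g$ is a non-increasing function of $|c|$ (the standard centring fact, proved by reflection across the hyperplane bisecting $[0,c]$), the average is largest when $|c|=\max(\rho-\lambda r,0)$ with $c$ on the segment $[0,x]$. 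Call this optimal-centre ball $B_r$, put $\Phi(r)=|B_r|^{-1}\int_{B_r}g$, so $M_\lambda g(x)=\sup_{r>0}\Phi(r)$. When $r\ge\rho/(1+\lambda)$ one has $|c|\le r$, i.e. $0\in B_r$; when $r<\rho/(1+\lambda)$ the origin lies outside $B_r$.

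The key step is the \textbf{monotonicity claim}: $\Phi$ is non-decreasing on $0<r\le\rho/(1+\lambda)$, the range where the origin is outside $B_r$. Granting it, $\sup_{r>0}\Phi(r)=\sup_{r\ge\rho/(1+\lambda)}\Phi(r)$ is a supremum over origin-containing balls, so $M_\lambda g(x)>t$ yields an admissible ball $B=B(c,r)$ with $0\in B$ (hence $|c|\le r$) and $|B|^{-1}\int_B g>t$. Then $\rho\le|c|+|x-c|\le r+\lambda r=(1+\lambda)r$, while $\int_B g>t\,\omega_d r^d$ and $\int_B g\le\|g\|_1$ give $\omega_d r^d<\|g\|_1/t$. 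Combining, $\omega_d|x|^d\le(1+\lambda)^d\omega_d r^d<(1+\lambda)^d\|g\|_1/t$. As this holds for every point of the level set,
\[
\{x:M_\lambda g(x)>t\}\subseteq\Big\{x:\omega_d|x|^d<(1+\lambda)^d\|g\|_1/t\Big\},
\]
whose measure equals $(1+\lambda)^d\|g\|_1/t$, which is the asserted inequality.

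To prove the monotonicity I would use the homothety $h_k$ centred at $x$ with ratio $k=r'/r>1$ (for $r<r'\le\rho/(1+\lambda)$), which one verifies maps $B_r$ onto $B_{r'}$; a change of variables gives $\Phi(r')=|B_r|^{-1}\int_{B_r}g\circ h_k$, reducing the claim to $\int_{B_r}(g\circ h_k-g)\ge0$. Under the dilation the near edge of $B_r$ advances toward the origin while the far edge recedes, at respective radial speeds $1+\lambda$ and $1-\lambda$, so $B_r$ is moved on balance into regions of larger $\phi$. Quantifying this through the layer-cake formula $g=\int_0^\infty\mathbf{1}_{B(0,R(\tau))}\,d\tau$ turns the statement into the area comparison $|B_r\cap h_k^{-1}B(0,R)|\ge|B_r\cap B(0,R)|$, integrated in $R$. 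I expect this monotonicity to be the main obstacle: the pointwise inequality $g\circ h_k\ge g$ fails near the far edge, so one must genuinely use the averaging rather than a termwise bound.

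For sharpness, test $g=\mathbf{1}_{B(0,\varepsilon)}$. For $|x|=\rho$ the smallest admissible ball engulfing $B(0,\varepsilon)$ satisfies $\rho\le(r-\varepsilon)+\lambda r$, hence has radius $r=(\rho+\varepsilon)/(1+\lambda)$, giving $M_\lambda g(x)\ge\varepsilon^d(1+\lambda)^d/(\rho+\varepsilon)^d$. Thus $|\{M_\lambda g>t\}|\ge\omega_d\big(\varepsilon(1+\lambda)t^{-1/d}-\varepsilon\big)^d$, and the ratio of this to $\|g\|_1/t=\omega_d\varepsilon^d/t$ equals $\big((1+\lambda)-t^{1/d}\big)^d\to(1+\lambda)^d$ as $t\to0$. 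Hence the constant cannot be lowered, which (equivalently, a Dirac mass at the origin) realises the extremal configuration and completes the proof.
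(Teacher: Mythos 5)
Your overall strategy---reduce to admissible balls that contain the origin, then combine $\int_B g\le\Vert g\Vert_1$ with $\Vert x\Vert\le(1+\lambda)r$---is exactly the paper's strategy (its Lemma \ref{le:3} and Proposition \ref{pr:4} assert the same reduction), and your sharpness computation is correct. The trouble is that your key monotonicity claim is false, so the reduction collapses. Take $d=1$, $\lambda=0$, $x=e_1$ (so $\rho=1$) and $g=\mathbf{1}_{[-1.1,\,1.1]}$, which is radial, decreasing and integrable. Then $B_r=[1-r,1+r]$ and
\[
\Phi(0.1)=\frac{|[0.9,1.1]\cap[-1.1,1.1]|}{0.2}=1,\qquad
\Phi(1)=\frac{|[0,2]\cap[-1.1,1.1]|}{2}=0.55,
\]
so $\Phi$ is not non-decreasing on $(0,\rho/(1+\lambda)]=(0,1]$, and indeed $\sup_{r>0}\Phi(r)=1>0.55=\sup_{r\ge 1}\Phi(r)$. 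In layer-cake terms, the area comparison $|B_{r'}\cap B(0,R)|/|B_{r'}|\ge|B_r\cap B(0,R)|/|B_r|$ fails whenever $R$ lies between the outer radii $\rho+(1-\lambda)r$ and $\rho+(1-\lambda)r'$: then $B_r\subset B(0,R)$ while $B_{r'}\not\subset B(0,R)$, so the fraction drops from $1$. For $\lambda<1$ the far edge genuinely recedes and the averaging does not rescue the inequality. Thus the step you yourself flagged as the main obstacle is not merely hard but false, and the inclusion $\{M_\lambda g>t\}\subseteq\{x:\omega_d|x|^d<(1+\lambda)^d\Vert g\Vert_1/t\}$ is left unjustified precisely at points $x$ just inside the boundary of a superlevel set of $g$, where the supremum is attained only by small balls.

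You should know that the paper's own Lemma \ref{le:3} makes the same assertion and is contradicted by the same example: its proof invokes \eqref{ineq:2.3} and the inclusion \eqref{ineq:2.4} for all $t>1-r$, whereas these hold only for $t\le 1$ (for $t=1.1$, $r=0.1$, $d=1$ one has $B(e_1,r)\cap B(0,t)=[0.9,1.1]\not\subset[0.99,1.21]=B(1.1,\,0.11)$, and \eqref{ineq:2.3} reads $0.11\ge 0.2$). So neither your argument nor the paper's, as written, proves the upper bound. A repair must treat small balls by a different mechanism: following \cite{AP11}, split $g$ at the level $s=g(\Vert x\Vert^-)$, apply the ball comparison only to $(g-s)^{+}$ (supported in $\overline{B(0,\Vert x\Vert)}$, which is the regime $t\le 1$ where \eqref{ineq:2.4} is valid), and absorb the bounded remainder through the Chebyshev-type bound $|\{g>t\}|\le\Vert g\Vert_1/t$. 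As it stands, your proposal establishes sharpness but not the weak type inequality.
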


We will assume that each ball is closed in the rest of this note.
\begin{lem}
		Let $g\in L^1(\mathbb{R}^d)$ be a radial function. Then $M_\lambda g$ is also radial and has a form of the following expression$:$	\begin{equation*}
			M_\lambda g (x) = \sup_{(\alpha,\beta):\alpha\in [0,1];\lambda\beta+\alpha\geq1} \frac{1}{|B(\alpha x, \beta\Vert x\Vert)|}\int_{B(\alpha x, \beta\Vert x\Vert)} gdx.
		\end{equation*} 
\end{lem}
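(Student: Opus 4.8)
The plan is to establish the identity in three steps — a rotation reduction giving radiality, a reduction of the center onto the ray through the origin and $x$, and a final reduction of the dilation parameter to $\alpha\in[0,1]$ — and I expect only the last to require real work. For radiality, I would note that for every $R\in O(d)$ and every ball $B=B(c,r)$ one has $\lambda(RB)=R(\lambda B)$, while $\frac{1}{|RB|}\int_{RB}g=\frac{1}{|B|}\int_{B}g$ because $g$ is radial and $R$ is volume preserving. Since $\lambda(RB)\ni Rx$ exactly when $\lambda B\ni x$, the admissible balls for $Rx$ are the $R$-images of those for $x$ and carry the same averages, so $M_\lambda g(Rx)=M_\lambda g(x)$; transitivity of $O(d)$ on spheres then gives that $M_\lambda g$ is radial. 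The inequality $\ge$ in the displayed formula is immediate: if $\alpha\in[0,1]$ and $\lambda\beta+\alpha\ge1$ then $\|x-\alpha x\|=(1-\alpha)\|x\|\le\lambda\beta\|x\|$, so $x\in\lambda B(\alpha x,\beta\|x\|)$ and each such ball is admissible.

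For the reverse inequality, fix $x\neq0$, write $\rho=\|x\|$, and take any admissible ball $B(c,r)$, i.e. $\|x-c\|\le\lambda r$. Radiality of $g$ makes the average $\frac{1}{|B(c,r)|}\int_{B(c,r)}g$ a function $A(s,r)$ of $s:=\|c\|$ and $r$ alone. Moving the center to the point $c^\star=(s/\rho)x=\alpha x$ on the ray $\mathbb{R}_{\ge0}\,x$, with $\alpha=s/\rho\ge0$, leaves $A$ untouched and cannot increase the distance to $x$, since $\|x-c^\star\|=|\rho-s|\le\|x-c\|\le\lambda r$; hence $B(c^\star,r)$ is still admissible with the same average. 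This replaces every admissible ball by one with center on the ray, parametrized by $(\alpha,\beta)=(s/\rho,\,r/\rho)$ with $\alpha\ge0$ and $|1-\alpha|\le\lambda\beta$.

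The crux — and the step I expect to be the main obstacle — is to discard centers with $\alpha>1$, equivalently $s>\rho$. Here I would invoke the monotonicity that $A(s,r)$ is nonincreasing in $s$ for fixed $r$: writing $\int_{B(c,r)}g=\bigl(g*\chi_{B(0,r)}\bigr)(c)$ with the indicator $\chi_{B(0,r)}$, both factors are nonnegative and radially decreasing, so their convolution is radially decreasing, and dividing by the constant $|B(0,r)|$ shows $A(s,r)$ decreases in $s$. For fixed $r$ the admissible range is $s\in[\max(0,\rho-\lambda r),\,\rho+\lambda r]$, so the supremum of $A(\cdot,r)$ is achieved at $s=\max(0,\rho-\lambda r)\le\rho$, i.e. at $\alpha\in[0,1]$ with $\lambda\beta+\alpha\ge1$; pushing $s$ beyond $\rho$ only lowers the average. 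Taking the supremum over $r$ then yields $M_\lambda g(x)\le$ the right-hand side, completing the proof.

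Two remarks on what the argument really uses. The point $x=0$ should be handled separately: the displayed balls degenerate there, but the same monotonicity shows the extremizers are centered at the origin, and in any event $\{0\}$ is null and irrelevant to Theorem~\ref{thm:1}. More importantly, the reduction to $\alpha\le1$ uses genuinely more than radiality — it rests on the convolution monotonicity above, a feature of radial \emph{decreasing} functions (precisely the class of Theorem~\ref{thm:1}); for a merely radial $g$ a ball centered beyond $x$ can strictly beat every ball centered on the segment $[0,x]$, so the decreasing structure in force throughout the paper is what powers this last step.
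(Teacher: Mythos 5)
Your argument is correct for radial \emph{decreasing} $g$, but it takes a genuinely different route from the paper's, and the difference is substantive. The paper disposes of the lemma with a single rotation: given an admissible ball $B(c,r)$, rotate it about the origin so that its center lands on the segment $[0,x]$. That works only when $\Vert c\Vert\leq\Vert x\Vert$, since a rotation about the origin cannot change $\Vert c\Vert$; admissible balls centered farther from the origin than $x$ (which exist as soon as $\lambda>0$) are silently discarded. Your proposal is precisely the repair of that step: after rotating onto the ray you are left with the range $|1-\alpha|\leq\lambda\beta$, and you eliminate $\alpha>1$ by the monotonicity of $s\mapsto (g*\chi_{B(0,r)})(se_1)$, which holds because the convolution of two nonnegative radially decreasing functions is radially decreasing (equivalently, $|B(c,r)\cap B(0,t)|$ is nonincreasing in $\Vert c\Vert$, combined with the layer-cake decomposition of $g$). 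Your closing remark is the important one: this step genuinely needs the decreasing hypothesis, and the lemma as stated for merely radial $g$ is false. For instance on $\mathbb{R}$ with $\lambda=1$, take $g=\chi_{[9,10]\cup[-10,-9]}$ and $x=1$: the interval $[1,10]$, with center $5.5$, is admissible and has average $1/9$, while a short case analysis shows every admissible interval with center in $[0,1]$ has average at most $1/10$. So you have not merely given a different proof; you have identified that the hypothesis of the lemma should read ``radial decreasing'' (or else the conclusion must allow $\alpha\in[0,1+\lambda\beta]$). Since the lemma is only ever invoked for radial decreasing functions, Proposition \ref{pr:4} and Theorem \ref{thm:1} are unaffected. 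The only loose end in your write-up is the convolution monotonicity itself, which you assert rather than prove; it reduces to the geometric fact that $|B(c,r)\cap B(0,t)|$ is nonincreasing in $\Vert c\Vert$ and is routine, and your separate treatment of the degenerate point $x=0$ is appropriate since the displayed balls collapse there.
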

\begin{proof}
	Fixed an $x\in \mathbb{R}^d$, let $B$ be a ball such that $\lambda B\ni x$. Then there exists a rotate transformation $T$ about the origin such that the center of $T(B)$ lies on the line segment bounded by the origin and $x$. Clearly, $x\in T(B)$ and  $\frac{1}{|B|}\int_B gdx=\frac{1}{|T(B)|}\int_{T(B)} gdx$. Yet the general, we can assume that the supremum is taken over in such ball sub-collections. The process of calculating the expression is simple, so we ignore it.
\end{proof}
\begin{re}
	This result still holds if we consider the case of homogeneous manifolds equipping a invariant measure. But the next proof of Lemma \ref{le:3} strongly depends on the homogeneous property of Lebesgue measure. The reader may find the other approach to tackle this issue.

\end{re}
\begin{lem}\label{le:3}
		Let $g\in L^1(\mathbb{R}^d)$ be a radial decreasing function. Then
		\begin{equation*}
			M_0 g (x) = \sup_{r\in [\Vert x\Vert,2\Vert x\Vert]} \frac{1}{|B(x,r)|}\int_{B(x,r)} gdx.
		\end{equation*} 
\end{lem}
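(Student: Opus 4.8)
Write $\rho=\Vert x\Vert$ and, for $r>0$, put
\[
A(r)=\frac{1}{|B(x,r)|}\int_{B(x,r)}g\,dy,\qquad\text{so that}\qquad M_0g(x)=\sup_{r>0}A(r).
\]
The assertion is exactly that neither the radii $r<\rho$ nor the radii $r>2\rho$ can improve on the window $[\rho,2\rho]$, i.e.
\[
\sup_{0<r<\rho}A(r)\le\sup_{r\in[\rho,2\rho]}A(r)\quad\text{and}\quad\sup_{r>2\rho}A(r)\le\sup_{r\in[\rho,2\rho]}A(r).
\]
The plan is to linearise $A$ through the layer--cake formula. Writing $g(y)=G(\Vert y\Vert)$ with $G$ non-increasing, the super-level sets $\{G>\tau\}$ are origin-centred balls $B(0,R(\tau))$, whence $g=\int_0^\infty\chi_{B(0,R(\tau))}\,d\tau$ and
\[
A(r)=\int_0^\infty\phi\big(r,R(\tau)\big)\,d\tau,\qquad \phi(r,R):=\frac{|B(x,r)\cap B(0,R)|}{|B(x,r)|}.
\]
Everything is thereby reduced to the monotonicity in $r$, for fixed $R$, of the captured fraction $\phi(\cdot,R)$.

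For the upper tail I would show that $A$ is non-increasing on $[2\rho,\infty)$. Since $|B(x,r)|=c_dr^d$, the quantity $\phi(r,R)$ is the $s^{d-1}$-weighted average over $s\in[0,r]$ of the sphere fraction $\psi(s)=\mathcal H^{d-1}(\partial B(x,s)\cap B(0,R))/\mathcal H^{d-1}(\partial B(x,s))$, so $\partial_r\phi(r,R)$ has the same sign as $\psi(r)-\phi(r,R)$; in particular $\phi(\cdot,R)$ decreases wherever $\psi$ does. A point of $\partial B(x,s)$ lies in $B(0,R)$ precisely when $\cos\theta\le(R^2-\rho^2-s^2)/(2\rho s)$, with $\theta$ the angle subtended at $x$, and the right-hand side is decreasing in $s$ once $R\ge\rho$; hence $\psi$, and with it $\phi(\cdot,R)$, is non-increasing on all of $(0,\infty)$ for such $R$. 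If instead $R<\rho$, then for $r\ge2\rho>\rho+R$ one has $B(0,R)\subseteq B(x,r)$ and simply $\phi(r,R)=(R/r)^d$, again decreasing. Integrating in $\tau$ yields monotonicity of $A$ on $[2\rho,\infty)$, which settles the upper tail.

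The lower cut-off is the step I expect to be the main obstacle. One wants $A(r)\le A(\rho)$ for $r<\rho$, i.e. that shrinking the ball below the radius at which it first reaches the origin cannot raise the average. The mechanism resisting this is already visible in the integrand. For a level with $R<\rho$ the fraction $\phi(\cdot,R)$ vanishes for $r\le\rho-R$, then rises, and the threshold computation above shows $\psi$ peaks at $s^\ast=\sqrt{\rho^2-R^2}<\rho$, so $\phi(\cdot,R)$ has an interior maximum strictly inside $[0,\rho]$; while for a level with $R\ge\rho$ the ball $B(x,r)$ can sit entirely inside $B(0,R)$ for small $r$, where $\phi\equiv1$ is already maximal. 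In either case a small concentric ball may capture a larger proportion of the relevant super-level set than $B(x,\rho)$ does, so $A$ need not be monotone on $[0,\rho]$ and a term-by-term comparison of the integrand fails. To force the inequality one would have to replace pointwise monotonicity by a genuine cancellation: comparing, level by level, the near-origin surplus captured by $B(x,\rho)$ against the far-side deficit it simultaneously incurs, and exploiting the translation invariance of Lebesgue measure (as flagged in the Remark) to re-centre the competing balls toward the origin. I would organise this around the worst levels, those with $R\uparrow\rho$, and expect the decisive estimate to be a quantitative bound of $\phi(r,R)$ by $\phi(\rho,R)$ that is uniform over these $R$.
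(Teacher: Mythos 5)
Your upper-tail argument is correct and complete: representing $\phi(r,R)=|B(x,r)\cap B(0,R)|/|B(x,r)|$ as an $s^{d-1}$-weighted running average of the sphere fraction $\psi(s)$, and checking that the threshold $(R^2-\rho^2-s^2)/(2\rho s)$ decreases in $s$ when $R\ge\rho$ (while $B(0,R)\subseteq B(x,r)$ outright once $r\ge\rho+R$ when $R<\rho$), does show each $\phi(\cdot,R)$, hence $A$, is non-increasing on $[2\rho,\infty)$. That is more than the paper supplies, since it dismisses this half as obvious. The genuine gap is the lower cut-off: you never establish $\sup_{0<r<\rho}A(r)\le\sup_{r\in[\rho,2\rho]}A(r)$, and that inequality is the entire substance of the paper's argument, which proves the level-by-level comparison \eqref{ineq:2.3} by a homothety centred at $e_1$ combined with the lens containment \eqref{ineq:2.4} quoted from \cite{AP11}. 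As a proof of the lemma, your text stops exactly where the paper's work begins.

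However, the obstruction you identified at that step is not a failure of technique: the levels you flagged genuinely falsify the statement, so the ``cancellation'' you hoped to organise does not exist. Take $g=\chi_{B(0,R)}$ with $\rho<R<2\rho$. For $0<r\le R-\rho$ one has $B(x,r)\subseteq B(0,R)$, so $A(r)=1$ and $M_0g(x)=1$; but for every $r\ge\rho$ the boundary point $(1+r/\rho)x$ of $B(x,r)$ has norm $\rho+r\ge2\rho>R$, so $A(r)<1$ throughout the compact window $[\rho,2\rho]$ and the right-hand side of the lemma is strictly below $1$. The same parameters $t=R\in(1,2)$, $r=R-1$ falsify \eqref{ineq:2.3} (the right side equals $(R-1)^d|B(e_1,1)|$ while the left side is strictly smaller because $B(e_1,1)\not\subseteq B(0,R)$), and the containment \eqref{ineq:2.4} also fails there, so the paper's own proof breaks at exactly the levels $\rho<R<2\rho$ that you singled out. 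Rather than seeking a quantitative bound of $\phi(r,R)$ by $\phi(\rho,R)$ uniform over these levels, the statement has to be weakened, e.g.\ to $M_0g(x)\le\max\bigl\{g(x),\,\sup_{r\in[\Vert x\Vert,2\Vert x\Vert]}A(r)\bigr\}$ for a.e.\ $x$; this version is provable by your layer-cake decomposition (the offending levels contribute at most $g(x)$ in total) and still yields Theorem \ref{thm:1}, since $\{g>t\}$ is itself an origin-centred ball of measure at most $\Vert g\Vert_1/t$.
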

\begin{proof}
	The proof is similar in spirit to Aldaz et al.(See \cite[Lemma 2.1]{AP11}). We are working on the assumption that $x$ is a unit vector on the first horizontal axis. We only need to show 
	\begin{equation}\label{ineq:2.3}
		r^d|B(e_1,1)\cap B(0,t)|\geq |B(e_1,r)\cap B(0,t)|
	\end{equation} for all $t+r>1$ and $r\in [0,1]$. If this happens then it is easy to show  \begin{equation*}
	\frac{1}{|B(e_1,1)|}\int_{B(e_1,1)} gdx\geq\frac{1}{|B(e_1,r)|}\int_{B(e_1,r)} gdx.
\end{equation*} This implies
\begin{equation*}
	M_0 g (x) = \sup_{r\in [\Vert x\Vert,+\infty]} \frac{1}{|B(x,r)|}\int_{B(x,r)} gdx.
\end{equation*} The other half of the proof is obvious, we omit it.  We now turn to prove \eqref{ineq:2.3}. To this end,  we note that the left side of the formula \eqref{ineq:2.3} is equal to $|B(e_1,r)\cap B((1-r)e_1,rt)|$ and thus we have to prove $	B(e_1,r)\cap B(0,t)	\subset B((1-r)e_1,rt)\cap B(0,t)$. It is exactly as Aldaz et al. proved that 
\begin{equation}\label{ineq:2.4}
	B(e_1,r)\cap B(0,t)\subset B(\frac{(1+t^2-r^2)e_1}{2},rt).
\end{equation}
Note that $\Vert \frac{(1+t^2-r^2)e_1}{2} \Vert\geq \Vert (1-r)e_1 \Vert$ since $t>1-r\geq 0$. And thus \eqref{ineq:2.4} ensure that $B(e_1,r)\cap B(0,t)	\subset B((1-r)e_1,rt)\cap B(0,t)$ holds. That's what we need, so lemma follows.
\end{proof} 
Let us mention a consequence of the lemmas. 
\begin{prop}\label{pr:4}
	Let $g\in L^1(\mathbb{R}^d)$ be a radial decreasing function. Then 	\begin{equation*}
		M_\lambda g (x) = \sup_{(\alpha,\beta):\alpha\in [\beta,\beta+1];\lambda\beta+\alpha\geq1} \frac{1}{|B(\alpha x, \beta\Vert x\Vert)|}\int_{B(\alpha x, \beta\Vert x\Vert)} gdx.
	\end{equation*} 
\end{prop}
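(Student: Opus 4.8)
The plan is to start from the representation in the first lemma, which already restricts the supremum defining $M_\lambda g(x)$ to balls $B(\alpha x,\beta\|x\|)$ subject to $\alpha\in[0,1]$ and $\lambda\beta+\alpha\ge1$, and to tighten it by pinning the radius parameter $\beta$ to a unit-length window determined by the center parameter $\alpha$, namely $\alpha\in[\beta,\beta+1]$. Using that $M_\lambda g$ is radial (again the first lemma) together with the dilation $B(\alpha x,\beta\|x\|)\mapsto B(\alpha e_1,\beta)$, I would reduce to $x=e_1$ and treat, for each fixed admissible $\alpha$, the single-variable map $\beta\mapsto |B(\alpha e_1,\beta)|^{-1}\int_{B(\alpha e_1,\beta)}g\,dy$; the goal becomes to show that admissible radii lying outside this window contribute nothing new to the supremum.

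The mechanism is the scaling inequality \eqref{ineq:2.3} underlying Lemma \ref{le:3}, applied as a pair of one-sided comparisons that generalize the two halves of that lemma (only one of which is spelled out there). For the centered case $\alpha=1$ these comparisons are precisely Lemma \ref{le:3}; in general each of them excludes the radii at one end of the admissible range, and together they confine $\beta$ to the unit-length window prescribed by Proposition \ref{pr:4}. Concretely, I would first prove the off-center analogue of \eqref{ineq:2.3}, rewriting $\beta_0^{\,d}\,|B(\alpha e_1,\beta_1)\cap B(0,t)|$ as the volume of an intersection of two balls through the homothety centered at $\alpha e_1$, exactly as \eqref{ineq:2.4} is produced in the proof of Lemma \ref{le:3}; then, since $g$ is radial decreasing, I would promote this set inequality to the corresponding inequality between averages by writing $g$ as a superposition of indicators $\mathbf 1_{B(0,t)}$ and integrating in $t$. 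Intersecting the two resulting radius restrictions with the constraints $\alpha\in[0,1]$ and $\lambda\beta+\alpha\ge1$ inherited from the first lemma then leaves exactly the admissible set $\{(\alpha,\beta):\alpha\in[\beta,\beta+1],\ \lambda\beta+\alpha\ge1\}$ of Proposition \ref{pr:4}.

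I expect the off-center generalization of \eqref{ineq:2.3} to be the crux. In Lemma \ref{le:3} the competing ball is anchored at the unit vector $e_1$, whereas here its center $\alpha e_1$ moves with $\alpha\in[0,1]$, so both the homothety identity and the key inclusion $B(\alpha e_1,\beta_1)\cap B(0,t)\subset B(c_0,\beta_0 t)$ must be re-derived carrying $\alpha$ as an extra parameter. The delicate point is the analogue of the step $\|\tfrac{(1+t^2-r^2)e_1}{2}\|\ge\|(1-r)e_1\|$ used at the end of Lemma \ref{le:3}: one must check that the relevant center distance is controlled uniformly in $\alpha$, and that the threshold on $t$ needed for the inclusion is exactly what the admissibility constraint $\lambda\beta+\alpha\ge1$ provides. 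Verifying that this bookkeeping closes up to give the window $\alpha\in[\beta,\beta+1]$, rather than some larger set, is where the real work lies.
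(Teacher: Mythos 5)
Your mechanism is the right engine---the paper offers no argument for Proposition \ref{pr:4} beyond calling it a consequence of the two lemmas, and fleshing that out is indeed a matter of an off-center analogue of \eqref{ineq:2.3} plus a layer-cake passage from the indicators $\mathbf{1}_{B(0,t)}$ to $g$. (In fact the point you flag as the crux dissolves: dilating by $1/\alpha$ sends $B(\alpha e_1,\beta)$ to $B(e_1,\beta/\alpha)$ and $B(0,t)$ to $B(0,t/\alpha)$, so the comparison of $B(\alpha e_1,\beta)$ with the origin-touching ball $B(\alpha e_1,\alpha)$ \emph{is} \eqref{ineq:2.3} with $r=\beta/\alpha$ and $t$ replaced by $t/\alpha$; no $\alpha$-dependent re-derivation of \eqref{ineq:2.4} is needed.) But your bookkeeping contains a genuine contradiction. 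The window you claim to land in, $\alpha\in[\beta,\beta+1]$, i.e.\ $\beta\in[\alpha-1,\alpha]$, specializes at the centered parameter $\alpha=1$ to $\beta\in[0,1]$, whereas Lemma \ref{le:3} pins the centered radii to $\beta\in[1,2]$; so your sentence that ``for $\alpha=1$ these comparisons are precisely Lemma \ref{le:3}'' is arithmetically incompatible with the set you say they produce. Carried out correctly, your two one-sided comparisons confine $\beta$ to $[\alpha,\alpha+1]$ (the ball must reach the origin, and must not extend too far beyond it), i.e.\ $\alpha\in[\beta-1,\beta]$: the constraint as printed is a typo, and the literal statement is in fact false. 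For $\lambda=0$ and $g=\chi_{B(0,\varepsilon)}/|B(0,\varepsilon)|$ with $\varepsilon$ small, $M_0g(x)\approx 1/|B(0,\Vert x\Vert)|$ via the ball $B(x,(1+\varepsilon)\Vert x\Vert)$, while every ball with $\alpha\geq\beta$ and $\alpha\geq 1$ captures at most about half the mass at no smaller volume, so the printed right-hand side is near $\tfrac12 |B(0,\Vert x\Vert)|^{-1}$. The corrected orientation is also what the proof of Theorem \ref{thm:1} silently uses: the minimum of $|B(\alpha x,\beta\Vert x\Vert)|$ equals $|B(\frac{x}{1+\lambda},\frac{\Vert x\Vert}{1+\lambda})|$ only under $\beta\geq\alpha$ and $\lambda\beta+\alpha\geq1$, which force $\beta\geq\frac{1}{1+\lambda}$; under the printed constraints one may take $\alpha=\beta+1$ and let $\beta\to0$, making $\sup 1/|B|$ infinite. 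A blind proof of the literal statement should have crashed into this; your plan instead asserts it will confirm the printed window, which it cannot.

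A second gap sits in your closing step. The claim that intersecting your radius restrictions with the constraints $\alpha\in[0,1]$, $\lambda\beta+\alpha\geq1$ of the first lemma ``leaves exactly'' the set of Proposition \ref{pr:4} cannot be right, because that proposition's set (in either orientation of the window) is not contained in $\{\alpha\leq1\}$: it contains, say, $\alpha=\beta=2$, and such balls need not even be admissible for $M_\lambda$, since admissibility requires $|1-\alpha|\leq\lambda\beta$. Your argument therefore yields a representation of $M_\lambda g(x)$ as a supremum over the smaller intersected set $\{\alpha\in[0,1],\ \beta\in[\alpha,\alpha+1],\ \lambda\beta+\alpha\geq1\}$---which is all that Theorem \ref{thm:1} actually needs---but the equality asserted in Proposition \ref{pr:4} additionally requires the reverse bound, namely that averages over the extra, possibly inadmissible balls with $\alpha>1$ never exceed $M_\lambda g(x)$. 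For radial decreasing $g$ this follows from the monotonicity of $\frac{1}{|B(\alpha x,\beta\Vert x\Vert)|}\int_{B(\alpha x,\beta\Vert x\Vert)}g\,dy$ in $\alpha$ for fixed $\beta$, but that step appears nowhere in your proposal.
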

We are now in a position to prove the main result.
\begin{proof}[Proof of Theorem \ref{thm:1}.] 
	Suppose $\Vert g\Vert_1=1$. By Proposition \ref{pr:4}, we have
	\begin{equation*}
		M_\lambda g (x) \leq \sup_{(\alpha,\beta):\alpha\in [\beta,\beta+1];\lambda\beta+\alpha\geq1} \frac{1}{|B(\alpha x, \beta\Vert x\Vert)|}=\frac{1}{|B(\frac{x}{1+\lambda}, \frac{\Vert x\Vert}{1+\lambda})|}
	\end{equation*}
if $\Vert x\Vert=R>0$. Hence the weak $(1,1)$ estimation follows. Considering  $f=\frac{\chi_{B(0,r)}}{|B(0,r)|}$ for $r>0$ and applying the standard technique, we can easily prove that $(1+\lambda)^d$ is the best (for instance \cite{Ko15}).
\end{proof}

\end{document}